\documentclass[11pt]{amsart}
\makeatletter
\@namedef{subjclassname@2020}{2020 Mathematics Subject Classification}
\makeatother

\usepackage{amsmath,amsfonts,amsthm,amssymb,graphicx,tikz,tikz-cd,url,hyperref,cleveref,mathtools}
\usepackage[all,2cell,ps]{xy}
\usepackage{fullpage}  

\theoremstyle{plain}
\newtheorem{thm}{Theorem}[section]

\newtheorem{prop}[thm]{Proposition}

\newtheorem{cor}[thm]{Corollary}

\newtheorem{qtns}[thm]{Questions}

\theoremstyle{definition}

\newtheorem{rem}[thm]{Remark}

\theoremstyle{remark}

\newcommand{\bbB}{\mathbb{B}}
\newcommand{\bbC}{\mathbb{C}}

\newcommand{\bbH}{\mathbb{H}}

\newcommand{\bbP}{\mathbb{P}}
\newcommand{\bbQ}{\mathbb{Q}}
\newcommand{\bbR}{\mathbb{R}}

\newcommand{\bbZ}{\mathbb{Z}}

\newcommand{\calA}{\mathcal{A}}

\newcommand{\calC}{\mathcal{C}}
\newcommand{\calD}{\mathcal{D}}
\newcommand{\calE}{\mathcal{E}}

\newcommand{\calH}{\mathcal{H}}

\newcommand{\calM}{\mathcal{M}}

\newcommand{\al}{\alpha}
\newcommand{\gam}{\gamma}
\newcommand{\Gam}{\Gamma}

\newcommand{\Del}{\Delta}
\newcommand{\ep}{\epsilon}

\newcommand{\Lam}{\Lambda}

\newcommand{\om}{\omega}

\DeclareMathOperator{\U}{U}

\DeclareMathOperator{\SL}{SL}

\DeclareMathOperator{\PGL}{PGL}

\DeclareMathOperator{\PU}{PU}
\DeclareMathOperator{\SU}{SU}

\DeclareMathOperator{\Sp}{Sp}

\DeclareMathOperator{\Id}{Id}

\DeclareMathOperator{\Aut}{Aut}

\DeclareMathOperator{\Out}{Out}

\newcommand{\bs}{\backslash}

\newcommand{\lra}{\longrightarrow}

\newcommand{\ssm}{\smallsetminus}

\DeclareMathOperator{\PGam}{\mathrm{P}\Gamma}
\DeclareMathOperator{\SR}{\mathcal{SR}}

\newcommand{\wh}{\widehat}
\newcommand{\wt}{\widetilde}

\newenvironment{pf}{\begin{proof}}{\end{proof}}

\allowdisplaybreaks

\makeatletter
\let\@@pmod\pmod
\DeclareRobustCommand{\pmod}{\@ifstar\@pmods\@@pmod}
\def\@pmods#1{\mkern4mu({\operator@font mod}\mkern 6mu#1)}
\makeatother

\usetikzlibrary{arrows}

\title[Automorphisms of the moduli of cubic surfaces]{Automorphisms of the 
moduli space of smooth cubic surfaces and its fundamental group}

\author[Baldi]{Gregorio Baldi}
\address{Gregorio Baldi\\ CNRS, IMJ-PRG, Sorbonne Universit\'{e}, 4 place Jussieu, 75005 Paris, France }
\email{ baldi@imj-prg.fr }

\author[Farb]{Benson Farb}
\address{Benson Farb \\ Dept. of Mathematics, University of Chicago, 5734 University Avenue Chicago, IL 60637-1514, USA}
\email{bensonfarb@gmail.com}

\author[Javanpeykar]{Ariyan Javanpeykar}
\address{Ariyan Javanpeykar \\
IMAPP Radboud University Nijmegen,
PO Box 9010, 6500GL,
Nijmegen, The Netherlands}
\email{ariyan.javanpeykar@ru.nl}

\author[Stover]{Matthew Stover}
\address{Matthew Stover, Dept. of Mathematics, Temple University, Philadelphia, PA 19122, USA}
\email{mstover@temple.edu}

\subjclass[2020]
{14D05, 
(22E40,  
32Q45, 
20F28)} 

\keywords{cubic surfaces, ball quotients, automorphisms, rigidity}

\date{\today}

\begin{document}

\maketitle

\begin{abstract}
Let $\calC$ be the moduli space of smooth complex cubic surfaces and let $\pi_1(\calC)$ be its (orbifold) fundamental group. We prove that the ``divisor subgroup'' of $\pi_1(\calC)$ is characteristic. This can be interpreted as saying that the group theory of $\pi_1(\calC)$ ``remembers'' the divisor of nodal cubic surfaces. We deduce from this group-theoretic result and some basic complex analysis that $\calC$ has no nontrivial biholomorphic automorphisms as complex analytic orbifold.
\end{abstract}

\section{Introduction}\label{sec:Intro}

Let $\calC$ be the moduli space of smooth, complex cubic surfaces, regarded as a complex analytic orbifold. In this paper we prove two main results. First, we prove that ``group theory remembers algebraic geometry'': every automorphism of the orbifold fundamental group $\pi_1(\calC)$ leaves invariant the ``divisor subgroup'' corresponding to the divisor $D$ of nodal cubics surfaces; see below for a precise statement. We then apply this group theory result to prove the following geometric result.

\begin{thm}\label{theorem:main1}
The group $\Aut(\calC)$ of biholomorphic automorphisms of the moduli space $\calC$ of smooth complex cubic surfaces is the trivial group.
\end{thm}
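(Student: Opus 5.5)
We will use the Allcock--Carlson--Toledo description of the moduli space as a ball quotient: $\calC \cong \Gam \bs (\bbB^4 \ssm \calH)$. Here $\Gam = \PGam$ is the projective unitary group of the Eisenstein lattice $\bbZ[\om]^{4,1}$ acting on complex hyperbolic $4$-space, and $\calH$ is the $\Gam$-invariant, locally finite union of totally geodesic hyperplanes orthogonal to the short roots. The nodal divisor $D$ is the image of $\calH$ in $\overline{\calC} = \Gam\bs\bbB^4$. The orbifold fundamental group $\pi_1(\calC)$ is $\pi_1^{orb}(\Gam\bs(\bbB^4\ssm\calH))$. It surjects onto $\Gam$, and the kernel is the ``divisor subgroup'' normally generated by meridians around $\calH$; these meridians map to the complex reflections of order $6$ in the Eisenstein root system.

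\textbf{Step 1: automorphisms extend across $D$.} Let $f \in \Aut(\calC)$. Lift $f$ to the universal orbifold cover of $\calC$. Because $\pi_1(\calC)$ has the divisor subgroup as a characteristic subgroup, $f_*$ preserves it (up to inner automorphisms). Hence $f$ lifts to a biholomorphism $F$ of the intermediate cover $\bbB^4\ssm\calH$, and $F$ normalizes the deck group $\Gam$ via the induced automorphism $f_*$ of $\Gam$. Since $\calH$ has complex codimension one in $\bbB^4$ and $F$ is bounded (it takes values in the ball), the Riemann extension theorem extends $F$ holomorphically to $\bbB^4$. Applying the same argument to $F^{-1}$ shows $F$ is a biholomorphism of $\bbB^4$, i.e.\ an element of $\PU(4,1)$ normalizing $\Gam$. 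Continuity gives $F(\calH)=\calH$.

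\textbf{Step 2: the normalizer of $\Gam$.} $\Gam$ is an arithmetic lattice, so its normalizer $N(\Gam)$ in $\PU(4,1)$ is a lattice containing $\Gam$ with finite index. I would show $N(\Gam)=\Gam$, or at least that every element of $N(\Gam)$ preserving $\calH$ acts trivially on $\Gam\bs\bbB^4$. The natural approach is to use maximality of $\PU(4,1;\bbZ[\om])$ among arithmetic lattices, via Prasad's volume formula and the classification of maximal arithmetic subgroups. Alternatively, one can argue directly: $N(\Gam)$ stabilizes the commensurability class and hence the $\bbQ(\om)$-structure, so it acts on the set of $\bbZ[\om]$-lattices in $\bbQ(\om)^{4,1}$ fixed by $\Gam$. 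The Eisenstein lattice is the unique such lattice up to scaling, so $N(\Gam)$ lies in its (projective) similitude group. Similitudes of an odd unimodular Hermitian lattice are, up to scalars, isometries, possibly composed with complex conjugation. Complex conjugation is antiholomorphic, so it is excluded. This gives $F\in\Gam$, and therefore $f$ is the identity on the coarse space. As an orbifold automorphism $f$ is then trivial, since the generic stabilizer of $\Gam$ on $\bbB^4$ is trivial; otherwise $f$ corresponds to a central element, and $\PGam$ has trivial center.

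\textbf{Main obstacle.} I expect Step 2 to be the hardest: showing the normalizer is exactly $\Gam$ requires careful lattice arithmetic, in particular handling possible non-integral similitudes or scaled lattices such as $\theta L$ with $\theta=\sqrt{-3}$. I would first try to exclude these using the $3$-adic structure of the discriminant.
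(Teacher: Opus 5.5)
Your Step 1 rests on the sentence ``because $\pi_1(\calC)$ has the divisor subgroup as a characteristic subgroup, $f_*$ preserves it,'' and this is asserted without any justification. That assertion is exactly \Cref{thmKchar}, the main technical result of the paper, and it is not automatic. For a genus $2$ curve $S$ and a point $p$, the kernel of $\pi_1(S\ssm\{p\})\to\pi_1(S)$ is not characteristic. Holomorphicity of $f$ does not supply it either: extension of $f$ across $\calH$ is what you want to conclude, not something you may assume.

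Everything downstream of this step agrees with the paper: lifting $f$ to $\bbB^4\ssm\calH$, the Riemann extension, and getting an element of $\PU(4,1)$ normalizing $\PGam_4$. But all of it hinges on the missing step, so you have placed the main obstacle in the wrong spot. The normalizer question in your Step 2 is settled in the paper simply by maximality of $\PGam_4$ (\Cref{prop:Maximal}) plus Mostow--Prasad (\Cref{cor:ArithNoAuts}). Your lattice-theoretic route to $N(\Gam)=\Gam$ is a plausible alternative, though the uniqueness of the $\Gam$-invariant lattice up to scaling would itself need proof.

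What is missing is an argument that $\phi(K)\le K$ for every $\phi\in\Aut(\pi_1(\calC))$, or at least for $f_*$. Since $K$ is the normal closure of $\tau^6$ for a single meridian $\tau$ (\Cref{prop:exact_sequence}), it suffices to show that $t=\rho(\phi(\tau))$ satisfies $t^6=1$. The paper does this in two stages.
\begin{enumerate}
\item $t$ is elliptic. Meridians around hyperplanes of $\calH$ that meet commute. In real rank one, commuting loxodromic or parabolic elements have the same fixed set on $\partial\bbB^4$. The support of $\calH$ is connected (\Cref{prop:ConnectedSupport}, proved from the $\widetilde{\mathrm{E}}_6$ hexaflection generators), so this fixed set propagates to the images of all meridians. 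Since one meridian normally generates $\pi_1(\calC)$, all of $\PGam_4$ would fix a point or a pair of points of $\partial\bbB^4$, which is absurd.
\item $t$ has order dividing $6$. A Magma computation shows that the homomorphism $\pi_1(\calC)\to\mathrm{W}(\mathrm{E}_6)$ is unique up to conjugation. Hence $t$ reduces modulo $(1-\om)$ to a conjugate of a reflection. Ellipticity makes its characteristic polynomial a product of cyclotomic factors. Together with the fact that the kernel of reduction modulo $1-\om$ has only elements of order at most $3$ \cite[Lem.~7.27]{ACT}, this forces the order of $t$ to divide $6$.
\end{enumerate}
Without an argument of this kind, or an alternative such as the Eckardt-divisor route sketched in \Cref{rmk1}, your proof is incomplete at its central step.
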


Here automorphisms of $\calC$ are with respect to the analytic orbifold structure on $\calC$ as described in \Cref{sec:GoodOrb}. \Cref{theorem:main1} complements the classical theorems
	\begin{align*}
	\Aut(\calM_g) = 1 && \Aut(\calA_g) = 1
	\end{align*}
for the moduli spaces $\calM_g$ of smooth curves with genus $g$ \cite{zbMATH03352015} and $\calA_g$ of $g$-dimensional principally polarized abelian varieties. More broadly, these results all fit within a larger program studying rigidity theorems for moduli spaces. See \cite{zbMATH07925348} for further context, references, and discussion.

Underpinning rigidity of $\calA_g$ is the fact that it is a \emph{locally symmetric variety}, meaning that it is the quotient of a hermitian symmetric domain by a discrete group $\Gam$ of biholomorphic automorphisms\footnote{This definition includes the case where $\Gam$ has torsion and the quotient is only a locally symmetric orbifold, and hence may only be a normal variety.}. More specifically, $\calA_g$ is the quotient of the Siegel upper half-plane $\bbH_g$ by $\Gam \coloneqq \Sp(2g,\bbZ)$. Then $\Aut(\Gam \backslash \bbH_g)$ is naturally identified with the normalizer of $\Gam$ in the real semisimple Lie group $\Aut(\bbH_g)$, reducing the computation of $\Aut(\Gam \backslash \bbH_g)$ to a problem about the normalizer in $\Sp(2g, \bbR)$ of the arithmetic group $\Gamma$.

This strategy does not work for the moduli space $\calC$. Indeed, $\calC$ is not even homotopy equivalent to a locally symmetric variety (see \cite[Thm.\ 1.2]{ACTorthogonal}). However, as explained in more detail in \Cref{sec:Prelims} below, Allcock--Carlson--Toledo \cite{ACT} proved using Hodge theory that there is a locally symmetric variety
	\[
	X \coloneqq \PU(4,1)(\calE) \bs \bbB^4
	\]
with $\calE \coloneqq \bbZ[e^{2\pi i/3}]$, a totally geodesic, immersed divisor
	\[
	D \cong \PU(3,1)(\calE) \bs \bbB^3
	\]
in $X$, and an isomorphism
	\[
		\calC\stackrel{\cong}{\lra} (X \ssm D)
	\]
of complex orbifolds. In \cite{ACT} they show that the divisor $D$ has a modular interpretation: it is isomorphic to the moduli space of singular cubic surfaces with only nodal singularities.

If an automorphism of $\calC\cong (X \ssm D)$ extended to an automorphism of the locally symmetric variety $X$, we could apply the strategy above to prove \Cref{theorem:main1}. The problem is that such a phenomenon is false in general: there are even examples of locally symmetric varieties $Y_i$ with divisors $D_i$ so that $(Y_1 \ssm D_1)\cong (Y_2 \ssm D_2)$ but $Y_1$ is not isomorphic to $Y_2$. See \cite{zbMATH00606995} and \cite{MR1136204} for examples.

Therefore, the main issue in our proof of \Cref{theorem:main1} is that automorphisms of $X \ssm D$ ``remember the divisor $D$''. Since $X$ is a ball quotient, it turns out that this is essentially a group-theoretic issue, as follows. The inclusion $i:\calC\cong (X \ssm D)\to X$ induces a surjection $i_*:\pi_1(\calC)\to\pi_1(X)$ of (orbifold) fundamental groups (see \Cref{sec:GoodOrb}) with kernel $K$, giving an exact sequence:
	\begin{equation}\label{eq:exact1}
		1 \lra K \lra \pi_1(\calC) \lra \PU(4,1)(\calE)\lra 1
	\end{equation}
Here $K=\pi_1(\bbB^4 \ssm \calH)$ where $\calH$ is the union of the set of all lifts of the divisor $D$ to the universal cover $\widetilde{X}\cong\bbB^4$. The key group theory result we use to prove \Cref{theorem:main1} is the following, which can be interpreted as ``the group $\pi_1(\calC)$ 
remembers the divisor $D$''.

\begin{thm}\label{thmKchar}
The kernel $K$ of the projection $\pi_1(\calC) \to \PU(4,1)(\calE)$ in \Cref{eq:exact1} is a \emph{characteristic} subgroup of $\pi_1(\calC)$; that is, $\phi(K)=K$ for every $\phi\in\Aut(\pi_1(\calC))$.
\end{thm}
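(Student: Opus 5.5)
We will characterize $K$ intrinsically inside $\Gamma \coloneqq \pi_1(\calC)$, using only properties preserved by every automorphism. The guiding facts are these. $K=\pi_1(\bbB^4\ssm\calH)$ is normally generated by meridians, i.e. small loops around the components of $\calH$. The images of these meridians in $\Gamma$ are the lifts of the order-two complex reflections in $\PU(4,1)(\calE)$ fixing the mirrors of $D$; concretely they correspond to the Dehn-twist-type monodromy around the nodal divisor. The quotient $\Gamma/K\cong\PU(4,1)(\calE)$ is a lattice in a rank-one Lie group with property that its finite and torsion structure is tightly controlled.

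\textbf{Step 1.} We will identify a distinguished conjugacy class $\mathcal{T}\subset\Gamma$ of "meridian" elements: the generators $\mu$ of the local orbifold fundamental groups near a generic point of $D$. These map to complex reflections $r\in\PU(4,1)(\calE)$ of order $2$ (or order $6$ after including the $\mu_3$ center, depending on conventions). In $\Gamma$, $\mu$ itself has infinite order, since the braid relation lifts.

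\textbf{Step 2.} We will show that $\mathcal{T}$ is characteristic. The plan is to single out its elements by a group-theoretic property. The first candidate is this: $\mu$ generates, together with its centralizer, a subgroup of the form $\bbZ\times\pi_1(\text{stratum } D\ssm\text{(deeper strata)})$, because the normal circle bundle is locally trivial. Moreover, $\mu$ generates the center of its centralizer up to finite index. We will compare this with arbitrary elements $g\in\Gamma$. If $g\notin K$, then its image in $\PU(4,1)(\calE)$ is elliptic, parabolic, or loxodromic. Centralizers of loxodromics are virtually abelian of small rank, which rules them out. For parabolics and elliptics, we analyze centralizers via the geometry of fixed sets in $\bbB^4$ and show that a center of infinite order in a "large" centralizer (one containing a lattice of $\PU(3,1)$) forces $g$ to map to a complex reflection; the lifting then identifies $g$ as a power of a meridian.

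Alternatively, and perhaps more cleanly, we will use that the torsion-free, finite-index subgroups behave well. We would pass to a neat finite-index characteristic subgroup, for instance the intersection of all subgroups of a given index, so that the argument takes place in a manifold setting.

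\textbf{Step 3.} Once $\mathcal{T}$ is shown to be $\Aut(\Gamma)$-invariant, $K$ is the normal closure of suitable powers of $\mathcal{T}$ (by the van Kampen theorem for $\bbB^4\ssm\calH$ versus $\bbB^4$). Normal closures of invariant sets are characteristic, which gives $\phi(K)=K$.

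\textbf{Main obstacle.} The hard step is Step 2: ruling out exotic automorphisms that send a meridian to a non-meridian element, particularly to elements whose image is elliptic with a large fixed set or parabolic. Our first approach would be to compute centralizers explicitly through the structure $K\rtimes$-type extension. We would use the fact that $K$ is a large, infinitely generated group (a free-product-like structure coming from the arrangement $\calH$), so that centralizers of elements of $K$ in $\Gamma$ are small unless the element is a meridian. If this fails, we would try to characterize $K$ directly as the unique maximal normal subgroup with a given property, for example being infinitely generated with quotient a lattice having property (T)-free behavior, or Kazhdan-type rigidity of the quotient, or being the minimal normal subgroup with Gromov-hyperbolic-type quotient. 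Any such characterization is automatically invariant under automorphisms.
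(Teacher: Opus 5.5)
\textbf{There is a genuine gap: Step 2 is never proved.} Your Step 3 is the right reduction. $K$ is the normal closure of $\tau^6$ for a single meridian $\tau$ (\Cref{prop:exact_sequence}), so everything hinges on controlling $\phi(\tau)$. But Step 2, which carries the whole content of the theorem, is a menu of strategies rather than an argument, and the one you develop rests on claims you have no way to verify.

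Centralizers in $\pi_1(\calC)$ are controlled by $\PGam_4$ only modulo the kernel. For $g \in \pi_1(\calC)$ there is an exact sequence $1 \to C_K(g) \to C_{\pi_1(\calC)}(g) \to C_{\PGam_4}(\rho(g))$. Here $K = \pi_1(\bbB^4 \ssm \calH)$ is an infinitely generated group whose structure is not known well enough to compute $C_K(g)$. So ``centralizers of loxodromics are virtually abelian of small rank'' is a statement about $\PGam_4$, not about $\pi_1(\calC)$.

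Likewise, two of your assertions are unsupported: that $C_{\pi_1(\calC)}(\tau)$ is (virtually) $\bbZ \times \pi_1(\text{stratum})$ with $\langle\tau\rangle$ of finite index in its center, and that no non-meridian has such a centralizer. Both would need injectivity and maximality statements for $\pi_1$ of the punctured normal bundle of $D$, which you do not supply. You would also have to separate $\tau$ from elements such as $\tau^k$ or $\tau_r\tau_s$ (orthogonal mirrors), whose centralizers are just as large.

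The fallback ideas, extremal normal subgroups with ``hyperbolic-type'' or property-(T)-type quotients, are not characterizations. Nothing shows such a subgroup is unique, and the nonuniform lattice $\PGam_4$ is not even Gromov hyperbolic.

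Two smaller inaccuracies. Meridians map to hexaflections of order $6$ in $\PGam_4$; order $2$ is their image in $\mathrm{W}(\mathrm{E}_6)$. Also, you cannot expect a single conjugacy class $\mathcal{T}$ to be invariant. The automorphism induced by complex conjugation (\Cref{rem:Anti1}) reverses the orientation of meridians, so you would need $\mathcal{T}\cup\mathcal{T}^{-1}$.

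\textbf{The missing idea.} You never need $\phi(\tau)$ to be a meridian, only $\rho(\phi(\tau))^6=1$, and this is reachable without knowing anything about $K$. The paper does it in two steps.

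First, $\rho(\phi(\tau))$ is elliptic. Meridians of intersecting mirrors commute, and in rank one, commuting parabolic (resp.\ loxodromic) isometries share their fixed set at infinity. By connectedness of $\calH$ (\Cref{prop:ConnectedSupport}) and normal generation by $\tau$, the whole group $\rho(\phi(\pi_1(\calC)))=\PGam_4$ would then fix that set, which is absurd.

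Second, its order divides $6$. The homomorphism to $\mathrm{W}(\mathrm{E}_6)$, which is reduction modulo $1-\om$, is unique up to conjugacy; this is a finite computation on $\mathrm{A}(\widetilde{\mathrm{E}}_6)$. Hence $\rho(\phi(\tau))$ reduces to a conjugate of the reduction of a hexaflection. Combine this with two facts: a finite-order element has characteristic polynomial a product of cyclotomic polynomials, and finite-order elements in the kernel of reduction have order at most $3$. Together these force $\rho(\phi(\tau))^6=1$, so $\phi(\tau^6)\in K$ and $\phi(K)\le K$.
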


\begin{rem}
\Cref{thmKchar} is not true for general reasons. As a simple example, let $S$ be a genus $2$ curve. Fix a point $p\in S$ and consider the divisor $D \coloneqq \{p\}$. One can show that the kernel $K$ of the natural surjection $\pi_1(S\ssm D) \to \pi_1(S)$ is not a characteristic subgroup of ${\pi_1(S\ssm D)}$.
\end{rem}

\begin{rem}[Alternate approach]\label{rmk0} 
As pointed out to us by Curt McMullen, an alternative proof of \Cref{theorem:main1} can be given using the \emph{Eckardt divisor}. This is the divisor parametrizing smooth cubic surfaces admitting an Eckardt point, or equivalently a complex reflection of order 2. In the ball quotient model of Allcock--Carlson--Toledo \cite{ACT}, it is the image of the union of the hyperplanes orthogonal to the long roots of the relevant Eisenstein lattice. See \Cref{rmk1} for a more detailed discussion.
\end{rem}

\noindent
\textbf{Further questions.} It would be interesting to understand automorphisms of, and more generally morphisms between, divisor complements of locally symmetric varieties. Several moduli spaces are of this form, such as the moduli space of smooth quartic curves. We have been able to use the methods of this paper together with Margulis superrigidity to classify automorphisms of $X \ssm D$ where $X$ (resp.\ $D$) is locally symmetric of type $\textrm{IV}_n$ (resp.\ $\textrm{IV}_{n-1}$). However, in the ball quotient case of this paper, the special nature of self-intersections of $D$ is crucial.
\medskip

The use of the purely group-theoretic \Cref{thmKchar} in the proof of \Cref{theorem:main1} suggests the following questions, which also echo analogous results for arithmetic groups and mapping class groups.

\begin{qtns}{\ }
\begin{enumerate}

\item Is $\Out(\pi_1(\calC)) \cong \bbZ / 2\bbZ$, generated by the outer automorphism induced by complex conjugation?

\item Is the abstract commensurator of $\pi_1(\calC)$ isomorphic to $\pi_1(\calC) \rtimes \langle \ep \rangle$, where $\ep$ is the automorphism induced by complex conjugation?

\end{enumerate}
\end{qtns}

\subsubsection*{Organization of the paper}

Preliminary results are contained in \Cref{sec:Prelims}, and some preliminary results needed to prove the main theorems are contained in \Cref{sec:More}. The proof of \Cref{thmKchar} is in \Cref{sec:CharProof}, and \Cref{theorem:main1} is proved in 
\Cref{sec:MainProof}.

\subsubsection*{Acknowledgments}

The authors thank Curtis McMullen and Daniel Allcock for comments on an early draft of the paper. A.J. was supported by the Netherlands Organization for Scientific Research NWO, through grant OCENW.M.23.206. B.F.\ is supported by National Science Foundation grant DMS-2203355. During the final stages of this work, G.B.\ was at the Institute for Advanced Study in Princeton and he is grateful for the amazing working conditions. He also thanks the Ambrose Monell Foundation and the Giorgio and Elena Petronio Fellowship Fundas well as the Agence Nationale de la Recherche for support (grant ANR-HoLoDiRibey). M.S.\ was supported by NSF grants DMS-2203555 and DMS-2506896, along with award SFI-MPS-TSM-00014184 from the Simons Foundation. This material is based upon work supported by the National Science Foundation under Grant No. DMS-1928930, while the author was in residence at the Simons Laufer Mathematical Sciences Institute in Berkeley, California, during the Spring semester 2026.

\section{$\calC$ as a divisor complement in a Picard modular $4$-fold}\label{sec:Prelims}

The goal of this section is to explain how $\calC$ is a divisor complement in a certain Picard modular $4$-fold, a locally symmetric orbifold.

\subsection{Good orbifolds}\label{sec:GoodOrb}

All complex analytic orbifolds in this paper will be \emph{good orbifolds}: any 
such $X$ can be realized as a quotient
	\[
	X = \Gam \bs \wt{X}
	\]
of a simply-connected complex manifold $\wt{X}$ by a group $\Gam$ acting effectively and properly discontinuously on $\wt{X}$ by biholomorphic automorphisms. Then
	\[
	\pi_1(X) \coloneqq \Gam
	\]
is the \emph{orbifold fundamental group} of $X$. The usual homotopy lifting properties hold in the category of good orbifolds, so a \emph{biholomorphic automorphism} of $X$ is equivalent to a $\Gam$-equivariant biholomorphic automorphism of $\wt{X}$. The group of biholomorphic automorphisms of a good orbifold $X$ is denoted $\Aut(X)$.

\subsection{Complex hyperbolic space}\label{backgroundcomplexhyp}

See \cite{zbMATH01270600} for a general reference on complex hyperbolic geometry and \cite[Ch.\ II.10]{zbMATH01385418} for a more general discussion of the geometry of rank one symmetric spaces. For each $n \ge 1$, the negative lines in $\bbP^n$ for the hermitian form
	\begin{equation}\label{eq:HermitianForm}
	h_n(z) \coloneqq -|z_0|^2 + \cdots + |z_n|^2
	\end{equation}
on $\bbC^{n+1}$ define a projective embedding
	\[
	\bbB^n \coloneqq \!\left\{[z] \in \bbP^n\ :\ h_n|_{[z]} < 0\right\}\! \subset\bbP^n
	\]
of the $n$-dimensional complex unit ball. The hermitian metric induced by $h_n$ is the Bergman metric on $\bbB^n$, which has biholomorphic automorphism group
	\[
	\PU(n,1)\coloneqq\!\left\{g \in \PGL_{n+1}(\bbC)\ :\ g(\bbB^n) = \bbB^n \right\}\!.
	\]
Equipped with this metric, $\bbB^n$ is \emph{complex hyperbolic $n$-space}, and it is naturally $\PU(n,1) / \U(n)$ under the left action of $\PGL_{n+1}$ on $\mathbb{P}^n$. Holomorphic totally geodesic embeddings $\bbB^{n-1}\hookrightarrow \bbB^n$ are in one-to-one correspondence with $h_n$-positive lines $\ell$ in $\bbP^n$, where the embedding is explicitly given by the orthogonal complement $\ell^\perp$. Moreover, the \emph{boundary at infinity} $\partial \bbB^n$ is identified with the space of $h_n$-isotropic lines in $\bbC^{n+1}$ and is homeomorphic to the $(2n-1)$-sphere.

The classification of holomorphic isometries of $\bbB^n$ is as follows. An element of $\PU(n,1)$ is called:
\begin{itemize}

  \item \emph{elliptic} if it has a fixed point in $\bbB^n$;

  \item \emph{parabolic} if it has no fixed point in $\bbB^n$ and exactly one fixed point on $\partial\bbB^n$; and

  \item \emph{loxodromic} if it has no fixed point in $\bbB^n$ and exactly two fixed points on $\partial\bbB^n$.

\end{itemize}
A \emph{complex reflection} is an elliptic element whose fixed-point set in $\bbB^n$ has complex codimension $1$. The fixed set of a complex reflection is then a totally geodesic embedding of $\bbB^{n-1}$.

\subsection{Eisenstein--Picard modular $n$-folds}\label{hypprel}

 Let $\omega$ be a primitive $3^{rd}$ root of unity and let $\calE = \bbZ[\omega]$ be the Eisenstein integers. For each $n \ge 1$,
	\[
	\Gam_n \coloneqq \SU(n,1)(\calE)
	\]
will denote the subgroup of $\SL_{n+1}(\calE)$ preserving the form $h_n$ from \Cref{eq:HermitianForm}. The image of $\Gam_n$ in $\PU(n,1)$ under projection will be denoted $\PGam_n$. Then $\PGam_n$ acts effectively and properly discontinuously on $\bbB^n$ by biholomorphic automorphisms, but not freely since it has nontrivial torsion.

The \emph{Picard modular orbifold} 
	\[
		X_n \coloneqq \PGam_n \bs \bbB^n
	\]
is a complete, complex-hyperbolic orbifold, and is noncompact for all $n \ge 2$. Moreover, Baily--Borel \cite{BailyBorel} proved that $X_n$ is a normal quasiprojective variety. If $m < n$, the standard inclusion $\bbB^m \hookrightarrow \bbB^n$ is equivariant for the analogous inclusion $\Gam_m \le \Gam_n$, inducing a totally geodesic immersion $X_m \looparrowright X_n$ of complex hyperbolic orbifolds.

\medskip

\noindent\textbf{Notation.} Throughout this paper, $X \coloneqq X_4$ and $D$ will denote the image of $X_3$ in $X$.

\begin{rem}\label{rem:ProjectiveEmbed}
Note that $\PGam_m$ is not a subgroup of $\PGam_n$. Indeed, the projection of $\SU(n,1)$ to $\PU(n,1)$ is injective on $\SU(m,1)$ for $m < n$, and the stabilizer in $\PU(n,1)$ of $\bbB^m$ is isomorphic to
	\[
	\mathrm{P}\!\left(\U(m,1) \times \U(n-m)\right)
	\]
with $\U(n-m)$ acting effectively on the normal bundle to $\bbB^m$ in $\bbB^n$.
\end{rem}

\subsection{Automorphisms of $X$}\label{ssec:X4auts}

In this subsection we recall some facts regarding the automorphisms of $\PGam_4$ and $X$. The following is known to experts.

\begin{prop}\label{prop:Maximal}
The lattice $\PGam_4 < \PU(4,1)$ is maximal with respect to inclusion of lattices.
\end{prop}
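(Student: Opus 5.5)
Suppose $\Lam < \PU(4,1)$ is a lattice containing $\PGam_4$. The plan is to show $\Lam = \PGam_4$.

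\textbf{Step 1: $\Lam$ is commensurable with $\PGam_4$ and hence arithmetic.} Since $\PGam_4$ has finite covolume, the index $[\Lam:\PGam_4]$ is finite. So $\Lam$ lies in the commensurator $\mathrm{Comm}(\PGam_4)$. Because $\PGam_4$ is arithmetic, Margulis' theorem (or, more elementarily, the description of commensurators of arithmetic groups) identifies this commensurator with the image in $\PU(4,1)$ of the $\bbQ$-points of the projective unitary group. Concretely, this is the image of $\U(4,1)(\bbQ(\om))$, where $\bbQ(\om)$ is the field of definition of $\calE$.

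\textbf{Step 2: reduce to maximal arithmetic subgroups.} It suffices to show that $\PGam_4$ is itself a maximal arithmetic subgroup. By the Borel--Prasad theory, maximal arithmetic subgroups are normalizers of principal arithmetic subgroups $\Lam_P$ attached to coherent collections of parahoric subgroups $P_v$ of $G(\bbQ_v)$, one for each prime $v$. The steps, in order, are:
\begin{enumerate}
\item Observe that $h_4$ is unimodular over $\calE$, so $\Gam_4$ is the stabilizer of the self-dual lattice $\calE^5$.
\item Check that, for $v \neq 3$, the local stabilizer of $\calE_v^5$ is a hyperspecial parahoric subgroup when $v$ splits or is inert in $\bbQ(\om)$.
\item At the ramified prime $3$, check that the stabilizer of $\calE_3^5$ is a special, and in fact maximal, parahoric subgroup of the ramified unitary group.
\end{enumerate}
This exhibits $\Gam_4$ as a principal arithmetic subgroup whose local factors are all maximal parahorics. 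We then want to know that no other maximal parahoric gives a larger normalizer. Rather than cite all of Bruhat--Tits, I would bound the index directly. The normalizer of $\Lam_P$ modulo $\Lam_P$ is controlled by a Galois cohomology group, and in practice by the stabilizers of the types of the $P_v$ in the local Dynkin diagram automorphism groups together with units and class group data. Since $\calE$ has class number one and unit group $\langle -\om \rangle$ of order $6$, this group is small, and the only candidate extra elements come from scalars. Those scalars die in $\PU$.

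\textbf{Step 3: handle the prime $3$, which I expect to be the main obstacle.} The local diagram at $3$ for the ramified quasi-split $\U(5)$ may have a symmetry exchanging two special vertices. A lattice in $\PU(4,1)$ could then contain elements that normalize $\Gam_4$ but act on $\calE^5$ only up to the ideal $(\sqrt{-3})$. In other words, such an element would map $\calE^5$ to its "dual rescaled by $\sqrt{-3}$." I would rule this out by a determinant/discriminant argument. Such an element $g$ would satisfy $h_4(gx,gy) = 3^{\pm1/2}\,u\, h_4(x,y)$ up to unit on a lattice of the same covolume, and $g$ would send $\calE^5$ to a lattice $L'$ with $L'^\vee = \sqrt{-3}^{\,k}L'$. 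Comparing discriminants ($\det h_4 = -1$ on $\calE^5$, odd rank $5$) forces $k = 0$, since $3^{5k/2}$ must equal $1$ up to a unit. Hence $L' = \calE^5$ and $g \in \PGam_4$.

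As a sanity check, I would also cite known covolume computations, namely Prasad's volume formula or the work of Allcock and of Emery--Stover on Picard modular groups. These show that $\PGam_4$ attains the minimal covolume among arithmetic lattices in its commensurability class. Minimal covolume directly implies maximality. If the parahoric bookkeeping becomes unwieldy, this is the fallback argument.
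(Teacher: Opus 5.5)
Your outline is essentially the paper's proof. The paper passes to the preimage $\wt\Gam_4$ of $\PGam_4$ in $\SU(4,1)$ and simply cites Emery--Stover for exactly the content of your Step 2: $\mathrm{N}(\wt\Gam_4)$ is a maximal lattice (as $\Gam_4$ is principal arithmetic with maximal parahorics everywhere), and by class number one $[\mathrm{N}(\wt\Gam_4):\Gam_4]=5$, the excess being the central $\bbZ/5\bbZ$ (your ``scalars'') that dies in $\PU(4,1)$; you instead unpack this machinery directly in $\PU(4,1)$.

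The one soft spot is Step 3, which is unnecessary and misstated. By your own Step 1 the relevant elements are isometries of $h_4$, so no factor $3^{\pm1/2}$ or type swap at $3$ can occur, and $g\calE^5$ is automatically self-dual. Being self-dual does not give $g\calE^5=\calE^5$. The conclusion $g\calE^5=a\calE^5$ with $a\bar a=1$ comes instead from $g$ normalizing each maximal parahoric and hence fixing its unique vertex, with central freedom only at split primes, which class number one absorbs.
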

\begin{pf}
Note that the preimage $\wt{\Gam}_4$ of $\PGam_4$ in $\SU(4,1)$ is a central extension of $\PGam_4$ by the center $\bbZ / 5\bbZ$ of $\SU(4,1)$. In particular, $[\wt{\Gam}_4 : \Gam_4] = 5$. It suffices to show that $\wt{\Gam}_4$ is a maximal lattice in $\SU(4,1)$. To prove this, \cite[Prop.\ 3.8]{MR3163356} shows that the normalizer $\mathrm{N}(\wt{\Gam}_4)$ of $\wt{\Gam}_4$ in $\SU(4,1)$ is a maximal lattice. Since $\bbQ(\om)$ has class number one, \cite[Lem.\ 3.7]{MR3163356} implies that $[\mathrm{N}(\wt{\Gam}_4): \Gam_4] = 5$, and the result follows.
\end{pf}

Mostow--Prasad rigidity (see \cite{Prasad}) gives the following immediate corollary.

\begin{cor}\label{cor:ArithNoAuts}
The orbifold $X$ admits no nontrivial holomorphic automorphisms.
\end{cor}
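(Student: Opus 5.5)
The plan is to reduce the statement to a normalizer computation, using the dictionary from \Cref{sec:GoodOrb}. Since $X = \PGam_4 \bs \bbB^4$ is a good orbifold with $\pi_1(X) = \PGam_4$, a biholomorphic automorphism $f$ of $X$ is the same as a $\PGam_4$-equivariant biholomorphism $\tilde f$ of $\bbB^4$. More precisely, $\tilde f$ is equivariant with respect to some automorphism $\theta$ of $\PGam_4$, in the sense that $\tilde f(\gam x) = \theta(\gam)\tilde f(x)$, and $\tilde f$ is determined up to composition with elements of $\PGam_4$. Because $\Aut(\bbB^4) = \PU(4,1)$ (here we use holomorphic automorphisms only), $\tilde f$ is an element $g \in \PU(4,1)$. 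The equivariance then says exactly that $g \PGam_4 g^{-1} = \PGam_4$. Therefore
\[
\Aut(X) \cong \mathrm{N}_{\PU(4,1)}(\PGam_4)/\PGam_4 ,
\]
and the deck transformations in $\PGam_4$ themselves induce the identity on $X$.

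With this in hand, take $g \in \mathrm{N}_{\PU(4,1)}(\PGam_4)$. Since $\PGam_4$ is a lattice and $g$ normalizes it, the group $\langle \PGam_4, g\rangle$ is discrete; this is the one step that needs care. I would argue it as follows. The normalizer $\mathrm{N}(\PGam_4)$ is a closed subgroup of $\PU(4,1)$ containing a lattice. Its identity component $\mathrm{N}^0$ is a connected group normalized by $\PGam_4$, so by Borel density it is normal in $\PU(4,1)$. Since $\PU(4,1)$ is simple, $\mathrm{N}^0$ is either trivial or all of $\PU(4,1)$. It cannot be all of $\PU(4,1)$: by Borel density the lattice $\PGam_4$ is not normal in $\PU(4,1)$, because a discrete normal subgroup of a connected group is central, and $\PU(4,1)$ has trivial center. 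Hence $\mathrm{N}^0$ is trivial, and $\mathrm{N}(\PGam_4)$ is a discrete group containing $\PGam_4$, so it is a lattice. Maximality (\Cref{prop:Maximal}) then forces $\mathrm{N}(\PGam_4) = \PGam_4$, so $g \in \PGam_4$ and $f$ is the identity.

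Finally, this fits the Mostow--Prasad phrasing. Rigidity for lattices in $\PU(n,1)$ with $n \ge 2$ (and $X$ is finite volume) says every automorphism of $\PGam_4$ is induced by conjugation by an isometry of $\bbB^4$. That isometry is holomorphic or antiholomorphic, and the holomorphic ones are exactly the normalizer elements above. So holomorphic automorphisms of $X$ correspond to the outer automorphisms of $\PGam_4$ realized in $\PU(4,1)$, and there are none beyond inner ones.

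The main obstacle is the discreteness of the normalizer. If the Borel density argument feels too quick, a fallback is commensurability: $\mathrm{N}(\PGam_4)$ lies in the commensurator of $\PGam_4$, which consists of $\bbQ$-points of the algebraic group. Discreteness then follows by noting that $g$ normalizes the integral lattice up to scaling, which is the content of \cite[Lem.\ 3.7]{MR3163356}. Everything else is formal.
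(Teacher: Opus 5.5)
Your proposal is correct and matches the paper's argument: a holomorphic automorphism of $X$ lifts to an element of $\PU(4,1)$ normalizing $\PGam_4$, the normalizer is a lattice, and maximality of $\PGam_4$ (\Cref{prop:Maximal}) forces the normalizer to equal $\PGam_4$. You make explicit the routine points (the good-orbifold dictionary and discreteness of the normalizer) that the paper compresses into ``immediate corollary of Mostow--Prasad rigidity''; for holomorphic automorphisms the lift already lies in $\Aut(\bbB^4)=\PU(4,1)$, so rigidity is not actually needed.
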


\begin{rem}\label{rem:Anti1}
Note that complex conjugation on $\bbB^4$, which is compatible with complex conjugation on $\PGL_5(\bbC)$ and stabilizes the standard embedding of $\bbB^5$, induces an antiholomorphic involution of $X$ preserving $D$.
\end{rem}

\subsection{The period mapping}\label{periodmapsec}

Allcock--Carlson--Toledo \cite{ACT} constructed a period mapping 
	\[
		\calC\to X \ssm D
	\]
where $X$ and $D$ are as in \Cref{hypprel}. To describe the image of $\calC$ in $X$ more precisely, let $S$ be a smooth cubic surface in $\bbP^3$ and let $T$ be the triple cover of $\bbP^3$ branched along $S$. The natural $\bbZ / 3\bbZ$ action on $T$ induces an action of $\omega$ on $H^3(T;\bbZ)$ making it a free $\calE$-module of rank $5$, and the cup product defines a unimodular $\calE$-hermitian form on $H^3(T;\bbZ)$ of signature $(4,1)$.

The map assigning to $S$ the polarized Hodge structure on $H^3(T;\bbZ)$ defines a period map valued in $\bbB^4$ with monodromy group a discrete subgroup of $\mathrm{U}(4,1)$ generated by complex reflections of order six. The projectivization of the monodromy group is isomorphic to $\PGam_4$. Moreover, there is a surjection 
	\[
		\PGam_4\lra \mathrm{W}(\mathrm{E}_6)
	\]
onto the Weyl group $\mathrm{W}(\mathrm{E}_6)$, the automorphism group of the $27$ lines on a cubic surface.

Let $\Lam \coloneqq \calE^{4,1}$ be the free module $\calE^5$ equipped with the Hermitian form $h_4$ defined by \Cref{eq:HermitianForm}. Then $\Lam$ is isometric to the $\calE$-module $H^3(T;\bbZ)$. A vector in $\Lam$ with norm $1$ is called a \emph{short root}. Let $\SR$ denote the collection of short roots. Since totally geodesic embeddings of $\bbB^3$ into $\bbB^4$ are in one-to-one correspondence with $h_4$-positive lines, each short root $r$ determines a totally geodesic inclusion $\bbB^3_r\lhook\joinrel\longrightarrow \bbB^4$. The short roots thus define the \emph{hyperplane arrangement}
	\[
		\calH \coloneqq \bigcup_{r\in \SR}\bbB^3_r
	\]
that is clearly preserved by $\PGam_4$. The following now summarizes several of the fundamental results from \cite{ACT}.

\begin{thm}[Allcock--Carlson--Toledo \cite{ACT}]\label{thm:ACTbig}
With the notation established in this section:
\begin{enumerate}

	\item The image of the period map $\calC \to \PGam_4 \bs \bbB^4$ is precisely $X \ssm D$.

	\item If $r_1, r_2 \in \SR$ then $\bbB^3_{r_1}$ and $\bbB^3_{r_2}$ are either disjoint or meet orthogonally.

	\item The action of $\PGam_4$ on the set of lines spanned by short roots is transitive.

\end{enumerate}
\end{thm}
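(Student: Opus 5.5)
Since \Cref{thm:ACTbig} is quoted from \cite{ACT}, I would not reprove part (1) from scratch. Parts (2) and (3), however, are lattice-theoretic statements about $\Lam = \calE^{4,1}$ that can be checked directly, and I would treat the three parts separately.

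For part (1), I would follow the Hodge-theoretic route of \cite{ACT}. First, the cyclic triple cover $T \to \bbP^3$ branched along $S$ has $H^{3,0}(T)=0$, and the $\om$-eigenspace of $H^{2,1}(T)$ is a negative line in $H^3(T;\bbC)_\om \cong \bbC^{4,1}$. This gives a point of $\bbB^4$ that is well defined up to monodromy, hence the period map $\calC \to X$. Second, I would show the period map is an open immersion (and injective) by an infinitesimal Torelli computation via Griffiths residues, plus a global Torelli argument. Third, I would extend it to the GIT compactification by stable and semistable cubics. At a nodal cubic, the vanishing cycle on $T$ is an $\calE$-vector of norm $1$, so nodal cubics map into $D$ and smooth cubics avoid the hyperplanes $\bbB^3_r$. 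Finally, the extended map is a proper bijection onto the Baily--Borel compactification. This properness/surjectivity step is the genuinely hard one, and I would simply cite \cite{ACT} for it.

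For part (2), suppose $\bbB^3_{r_1}\cap \bbB^3_{r_2}\neq\emptyset$ with $r_1, r_2$ spanning distinct lines. A common point is a negative line orthogonal to both $r_i$. Since $h_4$ has signature $(4,1)$, the span $\langle r_1,r_2\rangle$ must then be positive definite. Its Gram matrix is $\begin{pmatrix}1 & a\\ \bar a & 1\end{pmatrix}$ with $a = h_4(r_1,r_2)\in\calE$, so positive definiteness forces $|a|^2<1$. Every nonzero Eisenstein integer has norm $|a|^2\ge 1$, so $a=0$ and the hyperplanes meet orthogonally. The only other possibility is that they are disjoint (including tangency at infinity).

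For part (3), let $r$ be a short root. Because $h_4(r,r)=1$ is a unit, there is a splitting $\Lam = \calE r \oplus r^\perp$, with $r^\perp$ a unimodular $\calE$-lattice of signature $(3,1)$. This sublattice is odd, since its norms include values not divisible by $\sqrt{-3}$; compare with $\calE^{3,1}$. The key input, and the main obstacle in this part, is that indefinite odd unimodular Eisenstein lattices of given signature are unique up to isometry. I would prove this by using the fact that $\bbQ(\om)$ has class number one, then either splitting off a norm-$1$ vector inductively or invoking the Hasse principle with strong approximation for indefinite hermitian forms. Given two short roots $r, r'$, this uniqueness yields an isometry $r^\perp\cong r'^\perp$, which extends by $r\mapsto r'$ to some $g\in\U(4,1)(\calE)$. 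Its determinant $u$ is a unit, hence a sixth root of unity. Replacing $g$ by $ug$ gives determinant $u^5\cdot u = u^6 = 1$, so $ug\in\SU(4,1)(\calE)$. Its image in $\PGam_4$ carries the line $\calE r$ to $\calE r'$, which proves transitivity.
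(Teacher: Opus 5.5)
Your proposal is correct. It differs from the paper only because the paper gives no argument for \Cref{thm:ACTbig}: all three parts are imported from \cite{ACT}, and the only in-text reasoning is that (3) identifies the image of $\calH$ with $D$. For (1) you likewise defer to \cite{ACT}. For (2) and (3), however, you supply self-contained arithmetic of Eisenstein lattices.

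Your (2) is complete. A common point $z$ makes $\langle r_1,r_2\rangle\subset z^\perp$ positive definite, and $|a|^2\ge 1$ for $0\neq a\in\calE$ then forces $h_4(r_1,r_2)=0$. This makes the orthogonality used in \Cref{prop:ConnectedSupport} and in Step 2 of \Cref{sec:CharProof} independent of \cite{ACT}.

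Your (3) is also correct, including the determinant fix (note $u^5=u^{-1}$ in $\mu_6$). Both routes you suggest for the key input go through.
\begin{itemize}
\item Genus route. At the ramified prime $\sqrt{-3}$, unimodular hermitian lattices are diagonalizable and determined by rank and determinant modulo norms. At the other primes they are unique. So the genus is fixed by the signature, and strong approximation for $\SU$ together with class number one collapses it to a single class.
\item Inductive route, which is more elementary. Meyer's theorem applied to the trace form makes every indefinite $\calE$-lattice of rank at least $3$ isotropic. A primitive isotropic $e$ in a unimodular lattice has a partner $f$ with $h(e,f)=1$. Since the trace $\calE\to\bbZ$ is onto, some $ae+f$ has norm $1$ and splits off. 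In rank $2$, $\det=-1$ makes the form isotropic again.
\end{itemize}
The adjective ``odd'' is superfluous: the same trace argument shows every unimodular $\calE$-lattice has vectors of norm prime to $3$, so there is no parity dichotomy to track. The citation buys brevity; your route gives (2) and (3) without the explicit generators and computations of \cite{ACT}.

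There is one slip in your sketch of (1). The vanishing-cycle computation shows that nodal cubics land on $\calH$, but it does not by itself show that smooth cubics avoid $\calH$. That needs a separate input. One option is the bijectivity onto the Baily--Borel compactification that you cite at the end, combined with the discriminant mapping onto $D$. Another is the observation that a short root orthogonal to the period point spans a unimodular rank-two sub-Hodge structure. That would split a principally polarized elliptic curve off the intermediate Jacobian $J(T)$, contradicting the Clemens--Griffiths irreducibility of its theta divisor.
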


To be precise, the image of the period mapping is $\Gam_4 \bs\!\left(\bbB^4 \ssm \calH\right)$, and \Cref{thm:ACTbig}(3) then identifies the image with $X \ssm D$, since the standard embedding of $\bbB^3$ is associated with a short root. In particular, $D$ is the image of $\calH$ in $X$.

\subsection{Generators for $\pi_1(\calC)$}\label{ssec:Gens}

In the process of proving the results summarized in \Cref{thm:ACTbig}, a collection of seven generators for $\PGam_4$ is studied in \cite[\S 7]{ACT}, where the reader should be warned that an integral change of basis has been performed so that the hermitian form has the matrix $A$ given by \cite[Eq.\ (7.7.1)]{ACT}. Specifically, these generators are \emph{hexaflections}, meaning complex reflections of order $6$ fixing $\bbB^3_r$ for some root $r$ and acting by $-\omega$ on the normal bundle to $\bbB^3_r$. These generators are the image in $\PGam_4$ of Looijenga's generators for $\pi_1(\calC)$ realizing $\pi_1(\calC)$ as a quotient of the Artin group $\mathrm{A}(\widetilde{\mathrm{E}}_6)$ on the affine $\mathrm{E}_6$ diagram given in \Cref{fig:ArtinGens} \cite{zbMATH05254786}. The first six reflection generators are Libgober's \cite{zbMATH03574026}. In \Cref{fig:ArtinGens}, generators of $\mathrm{A}(\widetilde{\mathrm{E}}_6)$ mapping to hexaflections through a collection of mutually intersecting hyperplanes associated with short roots are circled. Note that there is a typo in \cite[Eq.\ (7.7.2)]{ACT}, where $r_7$ should be $(0,0,1,0,0)$, as defined in \cite[Lem.\ 7.17]{ACT}. The following compiles these results.

\begin{thm}\label{thm:ACTgensE6}
Let $R_j \in \PGam_4$ be the images in $\PU(4,1)$ of hexaflections through the roots $r_j$ given in \cite[\S 7]{ACT}. Then the $R_j$ satisfy the relations of $\mathrm{A}(\widetilde{\mathrm{E}}_6)$ on the affine $\mathrm{E}_6$ diagram, where the circled vertices in \Cref{fig:ArtinGens} indicate hexaflections through mutually intersecting hyperplanes associated with short roots. In particular, there is a surjective homomorphism $\mathrm{A}(\widetilde{\mathrm{E}}_6)\to \PGam_4$ factoring through the representation $\pi_1(\calC) \to \PGam_4$.
\end{thm}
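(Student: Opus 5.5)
The plan is to assemble the statement from three ingredients: an explicit check of the Artin relations among the $R_j$, the generation result of \cite[\S 7]{ACT}, and Looijenga's presentation \cite{zbMATH05254786} of $\pi_1(\calC)$ as a quotient of $\mathrm{A}(\widetilde{\mathrm{E}}_6)$. Throughout I use the basis of \cite[Eq.\ (7.7.1)]{ACT}, with the corrected root $r_7=(0,0,1,0,0)$ from \cite[Lem.\ 7.17]{ACT}.

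\textbf{Step 1: the Artin relations.} For a short root $r$, the hexaflection through $\bbB^3_r$ is
\[
R_r(v)=v-(1+\om)\,\langle v,r\rangle\, r .
\]
It fixes $r^\perp$ pointwise and multiplies $r$ by $-\om$. For two short roots $r,s$ I would reduce the needed relation to the value of $|\langle r,s\rangle|$.
\begin{itemize}
\item If $\langle r,s\rangle=0$, each of $R_r,R_s$ preserves the other's eigenspaces. Hence they commute, which is the relation for non-adjacent vertices.
\item If $|\langle r,s\rangle|=1$, I would restrict to the span of $r,s$, which is positive definite, and compute in $\U(2)$. There the two hexaflections satisfy $R_rR_sR_r=R_sR_rR_s$; this is the classical fact that such reflections generate a quotient of the $3$-strand braid group.
\end{itemize}
It then remains to compute the $7\times 7$ Gram matrix of $r_1,\dots,r_7$ with respect to $A$. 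I would check that the adjacent pairs of the affine $\mathrm{E}_6$ diagram have inner product of norm $1$ and all other pairs are orthogonal. The orthogonal pairs account for the circled, mutually intersecting hyperplanes, by \Cref{thm:ACTbig}(2). This gives a homomorphism $\mathrm{A}(\widetilde{\mathrm{E}}_6)\to\PGam_4$.

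\textbf{Step 2: surjectivity.} This is exactly the content of \cite[\S 7]{ACT}: the $R_j$ generate $\PGam_4$. ACT prove it by showing that the generated subgroup acts transitively on short roots and contains the stabilizer of a standard configuration; equivalently, it surjects onto $\mathrm{W}(\mathrm{E}_6)$ with the correct kernel. I would quote this rather than reprove it.

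\textbf{Step 3: factoring through $\pi_1(\calC)$.} Looijenga gives a surjection $\mathrm{A}(\widetilde{\mathrm{E}}_6)\to\pi_1(\calC)$ in which each standard generator maps to a meridian loop around the nodal discriminant. Its first six generators are Libgober's \cite{zbMATH03574026}. By the Picard--Lefschetz formula for the cyclic triple cover $T$, acquiring a node in $S$ produces a vanishing cycle in $H^3(T;\bbZ)$ whose $\calE$-span is a short root. The local monodromy around that node acts by $-\om$ on this span, so it is a hexaflection. Thus the monodromy representation $\pi_1(\calC)\to\PGam_4$ sends each Looijenga generator to a hexaflection in a short root.

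\textbf{Main obstacle.} The difficult step is identifying these monodromy hexaflections with ACT's specific $R_j$, not merely with hexaflections determined up to conjugacy. I would handle it as follows.
\begin{itemize}
\item The images of the seven generators satisfy the affine $\mathrm{E}_6$ Artin relations. By Step 1, this forces their roots to realize the same Gram matrix as $r_1,\dots,r_7$, at least up to units.
\item Unimodularity of $\Lam$, together with the transitivity in \Cref{thm:ACTbig}(3) extended to such configurations, should then give an element of $\PGam_4$ conjugating one $7$-tuple of roots onto the other.
\end{itemize}
After replacing the monodromy representation by this conjugate, the composite $\mathrm{A}(\widetilde{\mathrm{E}}_6)\to\pi_1(\calC)\to\PGam_4$ sends the $j$-th generator to $R_j$, which is the asserted factorization.

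If that configuration-transitivity argument proves delicate, my fallback is to cite ACT's own verification that their $R_j$ arise as monodromies of Libgober's loops together with Looijenga's seventh loop.
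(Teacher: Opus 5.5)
Your overall route matches the paper's. The paper does not prove this theorem independently; it compiles it from \cite[\S 7]{ACT}, which supplies the seven hexaflections $R_j$, shows they generate $\PGam_4$, and identifies them as the monodromy images of Libgober's and Looijenga's meridians. It combines this with Looijenga's presentation of $\pi_1(\calC)$ as a quotient of $\mathrm{A}(\widetilde{\mathrm{E}}_6)$. Your fallback in Step 3 is exactly that citation. However, the arguments you supply yourself contain an error and an unsupported step.

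\textbf{Step 1.} The span of two short roots with $|\langle r,s\rangle|=1$ is not positive definite. Its Gram determinant is $1-|\langle r,s\rangle|^2=0$; by Cauchy--Schwarz, non-proportional unit vectors in a positive definite space have $|\langle r,s\rangle|<1$. Moreover, $n=s-\langle s,r\rangle r$ is a nonzero isotropic vector orthogonal to both $r$ and $s$. So adjacent mirrors are disjoint in $\bbB^4$ and meet only at the cusp $[n]$, consistent with \Cref{thm:ACTbig}(2). Consequently there is no $\U(2)$ to compute in. Also, $\mathrm{span}(r,s)+\mathrm{span}(r,s)^\perp$ is only $4$-dimensional, so checking the braid relation on the span and its complement does not prove it on $\bbC^5$. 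The fix is a direct computation on the whole space. Set $\alpha=\langle v,r\rangle$, $\beta=\langle v,s\rangle$ and $c=\langle s,r\rangle$. Then both $R_rR_sR_r(v)-v$ and $R_sR_rR_s(v)-v$ are combinations of $r$ and $s$ with coefficients linear in $\alpha,\beta$. Comparing coefficients shows they agree for every $v$ if and only if $(1+\om)^2|c|^2=\om$, i.e.\ $|c|=1$. With this correction your Step 1 goes through.

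\textbf{Step 3.} The configuration-transitivity argument is not justified. The Artin relations only give $|\langle r_i,r_j\rangle|\in\{0,1\}$, and your Step 1 proves only the converse implication. Commuting also allows coincident mirrors. Furthermore, \Cref{thm:ACTbig}(3) gives transitivity on single short roots, not on $7$-tuples with a prescribed Gram matrix in a rank-$5$ $\calE$-lattice. That argument is also unnecessary for the final sentence of the statement. Composing Looijenga's surjection $\mathrm{A}(\widetilde{\mathrm{E}}_6)\to\pi_1(\calC)$ with the monodromy $\pi_1(\calC)\to\PGam_4$, which is surjective by Allcock--Carlson--Toledo, already gives a surjection factoring through $\pi_1(\calC)$. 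The finer claim that the $j$-th generator maps to the specific $R_j$ should simply be cited from \cite[\S 7]{ACT}, as the paper does.
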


\begin{figure}[h]
\centering
\scalebox{0.75}{
\begin{tikzpicture}
\draw[fill=black] (0,0) circle (0.075cm) node [yshift=0.5cm] {$r_3$};
\draw (0,0) circle (0.2cm);
\draw[fill=black] (2,0) circle (0.075cm) node [yshift=0.5cm] {$r_4$};
\draw[fill=black] (-2,0) circle (0.075cm) node [yshift=0.5cm] {$r_2$};
\draw[fill=black] (4,0) circle (0.075cm) node [yshift=0.5cm] {$r_5$};
\draw (4,0) circle (0.2cm);
\draw[fill=black] (-4,0) circle (0.075cm) node [yshift=0.5cm] {$r_1$};
\draw (-4,0) circle (0.2cm);
\draw[fill=black] (0,-2) circle (0.075cm) node [xshift=0.5cm] {$r_6$};
\draw[fill=black] (0,-4) circle (0.075cm) node [xshift=0.5cm] {$r_7$};
\draw (0,-4) circle (0.2cm);
\draw (-4,0) -- (4,0);
\draw (0,0) -- (0,-4);
\end{tikzpicture}
}
\caption{The hexaflection generators for $\pi_1(\calC)$ from $\mathrm{A}(\widetilde{\mathrm{E}}_6)$.}\label{fig:ArtinGens}
\end{figure}

\section{Some properties of the arrangement $\calH$}\label{sec:More}

In this section we prove two basic results about the hyperplane arrangement $\calH$ that will be used in proving the main theorems of this paper.

\subsection{The hyperplane arrangement is connected}\label{ssec:Connected}

The following consequence of \Cref{thm:ACTgensE6}, which was perhaps known to Allcock--Carlson--Toledo, plays a significant role in this paper.

\begin{prop}\label{prop:ConnectedSupport}
The arrangement $\calH$ in $\bbB^4$ defined in \Cref{sec:Prelims} has connected support.
\end{prop}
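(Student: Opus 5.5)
The plan is to use the hexaflection generators of \Cref{thm:ACTgensE6} together with the transitivity statement \Cref{thm:ACTbig}(3). Consider the ``intersection graph'' $\calG$ whose vertices are the hyperplanes $\bbB^3_r$, $r \in \SR$ (up to scalar), with an edge between two vertices when the hyperplanes meet in $\bbB^4$. The support of $\calH$ is connected if and only if $\calG$ is connected. Each $\bbB^3_r$ is connected, and $\calH$ is locally finite, since $\PGam_4$ acts properly discontinuously and the hyperplanes form finitely many orbits. Hence a path in $|\calH|$ between two hyperplanes passes through finitely many of them consecutively, and each consecutive pair must intersect. So it suffices to show that $\calG$ is connected.

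\textbf{Step 1.} Fix the base hyperplane $H_0 \coloneqq \bbB^3_{r_1}$, where $r_1$ is one of the circled short roots of \Cref{fig:ArtinGens}. By \Cref{thm:ACTbig}(3), every hyperplane of $\calH$ has the form $g H_0$ for some $g \in \PGam_4$. By \Cref{thm:ACTgensE6}, $\PGam_4$ is generated by the hexaflections $R_1, \dots, R_7$.

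\textbf{Step 2.} Show that for each generator $R_j$ and its inverse, the hyperplane $R_j^{\pm 1} H_0$ lies in the same connected component of $\calG$ as $H_0$. The component of $H_0$ in $\calG$ is then preserved by each generator. Indeed, if $C$ is that component, then $R_j C$ is the component of $R_j H_0$, which equals $C$. Hence $C$ is preserved by all of $\PGam_4$, so by transitivity $C$ contains every hyperplane.

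The verification in Step 2 proceeds as follows.
\begin{itemize}
\item If $R_j$ is a hexaflection in a hyperplane $H_j$ that meets $H_0$ (including $H_j = H_0$), then $R_j$ fixes $H_j \cap H_0 \neq \emptyset$ pointwise. So $R_j H_0$ meets $H_0$, and it is either equal or adjacent to $H_0$.
\item More generally, if $H_j$ is connected in $\calG$ to $H_0$ through hyperplanes $H_0 = K_0, K_1, \dots, K_m = H_j$, then $R_j$ fixes $H_j$. Consequently $H_0, \dots, H_j = R_j H_j, R_j K_{m-1}, \dots, R_j H_0$ is a path in $\calG$.
\end{itemize}
It therefore suffices to know that the seven generator hyperplanes $\bbB^3_{r_1}, \dots, \bbB^3_{r_7}$ themselves lie in one component of $\calG$.

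\textbf{Step 3 (the main obstacle).} This is the computation that the mirrors of the $R_j$ form a connected configuration. The circled roots $r_1, r_3, r_5, r_7$ give mutually intersecting hyperplanes by \Cref{thm:ACTgensE6}, so they lie in one component. For the uncircled roots $r_2, r_4, r_6$, I would check directly, using the explicit roots and the form $A$ of \cite[Eq.\ (7.7.1)]{ACT}, that each meets an adjacent circled hyperplane in $\bbB^4$. For short roots $r, s$, the hyperplanes $\bbB^3_r$ and $\bbB^3_s$ meet in $\bbB^4$ exactly when the span of $r, s$ is positive definite, that is, when $|\langle r,s\rangle|^2 < \langle r,r\rangle\langle s,s\rangle = 1$. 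Since the inner products lie in $\calE$, this forces $\langle r,s\rangle = 0$, i.e.\ orthogonality, consistent with \Cref{thm:ACTbig}(2).

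If some uncircled root is not orthogonal to its neighbours, I would instead use the braid relations. Adjacent generators satisfy $R_iR_jR_i = R_jR_iR_j$, so $R_j$ is conjugate to $R_i$ by $R_iR_j$, and hence $H_j = (R_iR_j)^{-1} H_i$ or the analogous expression. I would then apply the argument of Step 2 inductively to the circled generators, which already form a connected set, to place $H_j$ in the component of $H_0$. Alternatively, I would find an explicit short root orthogonal to both roots in question. This avoids circularity, because only the circled generators are used first and the uncircled mirrors are reached afterwards.
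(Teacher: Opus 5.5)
Your reduction in Steps 1--2 is sound and is essentially the paper's argument. The paper takes $\calH_0=\bbB^3_{r_1}\cup\bbB^3_{r_3}\cup\bbB^3_{r_5}$ (mutually orthogonal because these hexaflections commute), notes that every mirror $\bbB^3_{r_j}$ meets some piece of $\calH_0$ so that $R_j(\calH_0)\cap\calH_0\neq\emptyset$, and then inducts on word length and uses transitivity. So everything hinges on your Step 3, and there the proposal does not work as written.

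\textbf{The primary check fails.} You propose checking that each uncircled mirror meets a diagram-adjacent circled mirror. This fails for every uncircled root. Adjacent generators satisfy $R_iR_jR_i=R_jR_iR_j$, so they cannot commute, since commuting together with the braid relation would force $R_i=R_j$. Hence $\langle r_i,r_j\rangle\neq 0$. By your own criterion $|\langle r_i,r_j\rangle|^2<1$ together with $|\langle r_i,r_j\rangle|^2\ge 1$ for nonzero Eisenstein integers, the two mirrors are disjoint.

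\textbf{The braid-relation fallback is circular.} The expression $H_j=(R_iR_j)H_i$ involves $R_j$. But $R_j$ is only known to preserve your component $C$ once $H_j\in C$, which is what you are trying to show. Nor can you avoid $R_j$. The circled generators $R_1,R_3,R_5,R_7$ pairwise commute, so the group they generate maps each circled mirror to itself and never reaches an uncircled one.

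\textbf{The last alternative is unsupported.} ``Find an explicit short root orthogonal to both'' is only a search, with no argument that it succeeds.

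\textbf{The missing observation.} Use the \emph{commutation} relations of $\mathrm{A}(\widetilde{\mathrm{E}}_6)$ rather than the braid relations. Non-adjacent vertices give commuting hexaflections. Two commuting complex reflections with distinct mirrors have orthogonal roots. For orthogonal short roots $r,s$ the span is positive definite, so $\bbB^3_r\cap\bbB^3_s$ is a copy of $\bbB^2$, in particular nonempty. In the diagram:
\begin{itemize}
\item $r_2$ is non-adjacent to the circled roots $r_5$ and $r_7$;
\item $r_4$ is non-adjacent to $r_1$ and $r_7$;
\item $r_6$ is non-adjacent to $r_1$ and $r_5$.
\end{itemize}
So each uncircled mirror meets a circled mirror directly, and all seven mirrors lie in one component. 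Your Steps 1--2 then finish the proof with no computation in the explicit lattice.
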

\begin{pf}
Consider the hyperplanes $\calD_k = \bbB^4_{r_{2 k - 1}}$ with $r_{2 k - 1}$ as in \Cref{thm:ACTgensE6}, $k = 1,2,3$, and set
	\[
	\calH_0 \coloneqq \bigcup_{k = 1,2,3} \calD_k.
	\]
\Cref{thm:ACTbig} implies that the hyperplanes in the support of $\calH_0$ meet orthogonally, since the hexaflections $R_j$ all commute. Moreover, each generator $R_j$ of $\PGam_4$, $1 \le j \le 7$, has the property that $R_j(\calH_0) \cap \calH_0$ is nontrivial. Indeed, each $\bbB^4_{r_j}$ meets some $\calD_k$ nontrivially, hence $R_j$ acts trivially on some totally geodesic subspace of $\calH_0$.

Since the $R_j$ generate $\PGam_4$ and $\PGam_4$ acts transitively on the set of short roots, it follows by induction that $\calH$ has connected support. Indeed, if
	\[
	\calH_d \coloneqq \bigcup_{|\gam| \le d}\gam(\calH_0)
	\]
for $|\gam|$ the word length in the generators $R_j$, then $\calH_0$ is connected for the base case. If $\calH_d$ is connected and $R_{j_1} \cdots R_{j_{d+1}}$ is a word of length $d+1$ in the generators $R_j$, then $R_{j_{d+1}}(\calH_0) \cap \calH_0 \neq \emptyset$, so
	\[
	(R_{j_1} \cdots R_{j_{d+1}})(\calH_0) \supseteq (R_{j_1} \cdots R_{j_d})\!\left(R_{j_{d+1}}(\calH_0) \cap \calH_0\right)\! \neq \emptyset
	\]
has nontrivial intersection with $(R_{j_1} \cdots R_{j_d})(\calH_0) \subseteq \calH_d$, which completes the induction.
\end{pf}

\subsection{Meridians and the monodromy exact sequence}\label{ssec:Exact}

The orbifold isomorphism between $\calC$ and $X \ssm D$ makes the analytic manifold $\bbB^4 \ssm \calH$ into an orbifold cover of $\calC$ with deck group $\PGam_4$. See \cite[Thm.\ 2.20]{ACT}. As described in \Cref{ssec:Connected}, the image in $\PU(4,1)$ of the monodromy representation takes Looijenga's generators for $\pi_1(\calC)$ \cite{zbMATH05254786} (which contain Libgober's \cite{zbMATH03574026}) to the hexaflections $R_j$ in short roots from \Cref{thm:ACTgensE6}, and these generate $\PGam_4$. In fact, $\PGam_4$ is normally generated by a single hexaflection in a short root by \Cref{thm:ACTbig}(iii) and \cite[Thm.\ 2.14]{ACT}. The kernel of the monodromy is then the fundamental group of $\bbB^4 \ssm \calH$.

Looijenga's generators for $\pi_1(\calC)$ are \emph{meridians}, i.e., loops around $D$. To be precise, consider the cover of $\calC$ by $\bbB^4 \ssm \calH$ and a short root $r$. The normal bundle to $\bbB^3_r$ in $\bbB^4$ is trivial, so choose a small holomorphic disk $\wt{\Del}_r$ in a fiber. Let $\Del_r$ be the image of $\wt{\Del}_r$ in $X$. Projection onto $X$ maps $\bbB^3_r$ to $D$ under the conjugated action of $\Gam_3$, and the action of the hexaflection in $r$ on the normal bundle to $\bbB^3_r$ is $w \mapsto w^6$ in suitable coordinates for $\wt{\Del}_r \to \Del_r$. See \Cref{fig:Meridian}. The following result concerning generators is implicit in \cite{ACT} but not stated there in this manner.
\begin{figure}[h]
\centering
\scalebox{0.6}{
\begin{tikzpicture}
\draw[rotate=30, thick] (0,0) circle [x radius = 0.05cm, y radius = 0.1cm];
\draw[rotate=30, thick] (0,0) circle [x radius = 1cm, y radius = 2cm];
\draw[thick, blue] (-2.5,-2.5) -- node[below right] {$\bbB^3_r$} (0.02,0.02);
\draw[rotate=30, red, thick] (0,0) circle [x radius = 0.5cm, y radius = 1cm] node[yshift = 1em] {$\wt{\tau_r}$};
\draw[thick, blue, dotted] (0.75,0.75) -- (0,0);
\draw[thick, blue] (0.75,0.75) -- (2,2);
\node at (-2.25,1) {$\wt{\Del}_r \ssm \{0\}$};
\draw[->] (2,0) -- node[below] {$\left(w \mapsto w^6\right)\! \times \Id$} (6,0);
\draw[rotate around={30:(8,0)}, thick] (8,0) circle [x radius = 0.05cm, y radius = 0.1cm];
\draw[rotate around={30:(8,0)}, thick] (8,0) circle [x radius = 1cm, y radius = 2cm];
\draw[thick, blue] (5.5,-2.5) -- node[below right] {$D$} (8.02,0.02);
\draw[rotate around={30:(8,0)}, red, thick] (8,0) circle [x radius = 0.5cm, y radius = 1cm] node[yshift = 1em] {$\tau_r$};
\draw[thick, blue, dotted] (8.75,0.75) -- (8,0);
\draw[thick, blue] (8.75,0.75) -- (10,2);
\node at (5.75,1) {$\Del_r \ssm \{0\}$};
\end{tikzpicture}
}
\caption{The meridian around $D$ locally under the orbifold covering.}\label{fig:Meridian}
\end{figure}

\begin{prop}\label{prop:exact_sequence}
There is a short exact sequence
	\begin{equation}\label{eq:exact_sequence}
	1 \lra K \lra \pi_1(\calC) \lra \PGam_4 \lra 1
	\end{equation}
with $K\cong\pi_1(\bbB^4 \ssm H)$. Moreover, $\pi_1(\calC)$ is normally generated by any single meridian $\tau$ around $D$ and $K$ is the normal closure in $\pi_1(\calC)$ of $\tau^6$.
\end{prop}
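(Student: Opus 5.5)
The plan is to obtain \eqref{eq:exact_sequence} from orbifold covering theory, using the description of $\bbB^4 \ssm \calH$ as an orbifold cover of $\calC$, and then to read off the generation statements from the local picture of a meridian in \Cref{fig:Meridian}.

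\textbf{Step 1: the exact sequence.} By \cite[Thm.\ 2.20]{ACT}, $\bbB^4 \ssm \calH$ is an orbifold cover of $\calC \cong X \ssm D$ with deck group $\PGam_4$. The space $\bbB^4 \ssm \calH$ is a connected manifold, since $\calH$ is a locally finite union of real codimension-two submanifolds. Write $\calC = \pi_1(\calC)\bs \wt{\calC}$ with $\wt{\calC}$ the universal cover of $\bbB^4 \ssm \calH$. Standard covering theory for good orbifolds then gives a surjection $\pi_1(\calC) \to \PGam_4$ whose kernel is the deck group of $\wt{\calC} \to \bbB^4 \ssm \calH$, namely $K \cong \pi_1(\bbB^4 \ssm \calH)$. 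This surjection is the monodromy representation, consistent with \Cref{thm:ACTgensE6}.

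\textbf{Step 2: a single meridian normally generates $\pi_1(\calC)$.} By \cite{zbMATH05254786}, $\pi_1(\calC)$ is generated by Looijenga's meridians, which are the images of the generators of $\mathrm{A}(\wt{\mathrm{E}}_6)$. Adjacent vertices of the connected $\wt{\mathrm{E}}_6$ diagram satisfy braid relations $aba=bab$, so the corresponding generators are conjugate. Hence all Looijenga generators are conjugate, and any one of them normally generates $\pi_1(\calC)$.

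To pass to an arbitrary meridian $\tau$ around $D$, use that meridians are determined up to conjugacy by the local branch of $D$ they encircle. A meridian around the branch $\bbB^3_r$ lifts, via a path in $\bbB^4\ssm\calH$, to a loop around $\bbB^3_r$ as in \Cref{fig:Meridian}. By \Cref{thm:ACTbig}(3), $\PGam_4$ acts transitively on the short-root hyperplanes, and the complement of the hyperplane arrangement is connected. Together these show that any two meridians are conjugate in $\pi_1(\calC)$, possibly up to inversion, which depends on orientation; we fix the complex orientation so that inversion does not arise. Therefore $\tau$ is conjugate to a Looijenga generator and normally generates $\pi_1(\calC)$.

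\textbf{Step 3: $K$ is the normal closure $N$ of $\tau^6$.} First, $N \subseteq K$. The image of $\tau$ in $\PGam_4$ is a hexaflection, which has order $6$ in $\PU(4,1)$ because it acts by $-\omega$ on the normal bundle. Hence $\tau^6 \in K$.

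For the reverse inclusion, $K$ is generated as a group by meridians in $\bbB^4 \ssm \calH$ around the hyperplanes $\bbB^3_r$. This is standard: the fundamental group of the complement of a locally finite divisor in the simply connected $\bbB^4$ is generated by meridians, by a general-position argument that removes codimension-two submanifolds from a simply connected manifold. Each such small loop $\wt\tau_r$ maps, under $w\mapsto w^6$, to $\tau_r^6$. So each generator of $K$ is conjugate in $\pi_1(\calC)$ to a sixth power of a meridian, and by Step 2 that is in turn conjugate to $\tau^{6}$. This gives $K \subseteq N$.

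\textbf{Expected obstacle.} The delicate point is Step 3's claim that a small loop in $\bbB^4\ssm\calH$ maps exactly to $\tau_r^6$. This requires the local orbifold chart near a generic point of $D$ to be $w\mapsto w^6$, meaning the stabilizer of a generic point of $\bbB^3_r$ in $\PGam_4$ acts on the normal disk by the cyclic group of order $6$ generated by the hexaflection. I would verify this using \Cref{rem:ProjectiveEmbed}: the stabilizer of $\bbB^3_r$ acts on the normal direction through $\U(1)\cap\calE^\times=\mu_6$. A second check is that generic points of $\bbB^3_r$ have no further stabilizer contributing to the normal action; away from the orthogonal intersections with other hyperplanes given by \Cref{thm:ACTbig}(2), this holds by discreteness.
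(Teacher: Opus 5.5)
Your proof is correct. Steps 1 and 3 are the paper's argument:
\begin{itemize}
\item orbifold covering theory gives the exact sequence;
\item $K=\pi_1(\bbB^4\ssm\calH)$ is generated by the loops $\wt\tau_r$, each of which equals $\tau_r^6$ in $\pi_1(\calC)$;
\item transitivity of $\PGam_4$ on the short-root hyperplanes makes all of these conjugate.
\end{itemize}

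The one genuine difference is how you obtain normal generation of $\pi_1(\calC)$ by $\tau$. You import Looijenga's theorem that his meridians generate $\pi_1(\calC)$ itself, and then show they are all conjugate. Your braid-relation argument is actually redundant: once transitivity shows that all positively oriented meridians are conjugate, the Looijenga generators are conjugate automatically.

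The paper instead reads normal generation off the exact sequence. $\PGam_4$ is normally generated by the single hexaflection $\rho(\tau)$, since it is generated by short-root hexaflections and these are all conjugate. Also, $K$ is the normal closure of $\tau^6$. Hence the normal closure of $\tau$ contains $K$ and surjects onto $\PGam_4$, so it is all of $\pi_1(\calC)$. The paper's route therefore needs only the Allcock--Carlson--Toledo fact that $\PGam_4$ is generated by hexaflections. Yours is shorter but rests on the stronger input of Looijenga's presentation of $\pi_1(\calC)$.

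Your check that the generic local chart is $w\mapsto w^6$ fills in a detail the paper simply asserts. You argue that at a generic point only the pointwise stabilizer of $\bbB^3_r$ contributes, and that it acts on the normal line through ratios of units in $\calE^\times=\mu_6$.

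Two small imprecisions, neither of which affects the argument:
\begin{itemize}
\item The orbifold meridian $\tau_r$ lifts to a $1/6$-arc in $\wt\Del_r\ssm\{0\}$, not to a loop. Only $\tau_r^6$ lifts to the closed loop $\wt\tau_r$.
\item Conjugacy of meridians based at different points of the same $\bbB^3_r$ uses connectedness of $\bbB^3_r$ minus its intersections with the other hyperplanes. Connectedness of $\bbB^4\ssm\calH$ only handles the choice of base path.
\end{itemize}
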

\begin{pf}
The first statement is a direct consequence of the surjectivity of $\pi_1(\calC) \to \PGam_4$ and orbifold covering space theory. Continuing with the notation of this subsection, the elements $\wt{\tau}_r$ representing positively oriented loops in $\Del_r \ssm \{0\}$ around $\bbB^3_r$ in $\bbB^4 \ssm \calH$ for $r \in \SR$ certainly generate $\pi_1(\bbB^4 \ssm \calH)$, since $\bbB^3$ and $\bbB^4$ are both contractible; see \cite{ACTorthogonal}. The image $\tau_r$ of $\wt{\tau}_r$ in $\pi_1(\calC)$ is then the sixth power of a meridian around $D$.

The covering to $X \ssm D \cong \calC$ then realizes $\wt{\tau}_r$ as $\tau_r^6$ in $\pi_1(\calC)$ by the orbifold path lifting property. Similarly, the inclusion $X \ssm D\hookrightarrow X$ induces the surjection of $\pi_1(\calC)$ onto $\PGam_4$ simply by imposing the relation $\tau_r^6 = 1$ (cf.\ \cite[\S 3]{zbMATH05254786}). The fact that $\PGam_4$ acts transitively on the hyperplanes in $\calH$ implies that for all $r, s \in \SR$, there is an element of $\PGam_4$ conjugating $\tau_r$ to $\tau_s$. Since $\PGam_4$ is generated by hexaflections through short roots, it is thus normally generated by a single one of them. The preimage in $\pi_1(\calC)$ of any such conjugating element conjugates $\wt{\tau}_r$ to $\wt{\tau}_s$, so the same conclusion holds in $K$. The proposition follows.
\end{pf}

\section{The proof of \Cref{thmKchar}}\label{sec:CharProof}

Continuing with the notation from \Cref{sec:Prelims}, let $K$ be the kernel of the monodromy action of $\pi_1(\calC)$ on $\bbB^4$ as in \Cref{eq:exact1} and
	\[
		\rho : \pi_1(\calC) \to \PGam_4
	\]
be that surjection. To prove that $K$ is characteristic in $\pi_1(\calC)$, it suffices to show that $\phi(K)\leq K$ for all $\phi \in \Aut(\pi_1(\calC))$. Indeed, if $\phi(K) \le K$ for all $\phi$ then $\phi^{-1}(K)\leq K$ as well, so
	\[
	K=(\phi\circ\phi^{-1})(K)\leq\phi(K)\leq K
	\]
implies that $K=\phi(K)$. Fix $\phi \in \Aut(\pi_1(\calC))$. Recall that $\rho(\tau_r)$ is a hexaflection, so it is an elliptic element of $\PGam_4$ of order six.

We prove the theorem in steps.

\medskip
\noindent\textbf{Step 1.} Modulo conjugation in $\mathrm{W}(\mathrm{E}_6)$, there is a unique homomorphism $\pi_1(\calC)\to\mathrm{W}(\mathrm{E}_6)$.
\smallskip

\noindent
\textbf{Proof.} We prove this result with the aid of Magma \cite{MR1484478}. This was also known to Peter Huxford (private communication) by the same method.

\Cref{thm:ACTgensE6} implies that there is a canonical composition of surjections
\begin{equation}\label{eqarting}
\mathrm{A}(\widetilde{\mathrm{E}}_6) \lra \pi_1(\calC)\lra \PGam_4 \lra \mathrm{W}(\mathrm{E}_6)
\end{equation}
where the homomorphisms are, from left to right, given by taking Looijenga's generators, the monodromy action on $\bbB^4$, and finally the homomorphism to the symmetric group of the $27$ lines on a cubic surface. Critically for the proof of Step 2 below, by \cite[\S 3.12]{ACT} the homomorphism to $\mathrm{W}(\mathrm{E}_6)$ can also be described as reduction of the arithmetic group $\PGam_4$ modulo $(1-\om)$.

Using the sequence of homomorphisms in \Cref{eqarting}, it suffices to show that there is a unique homomorphism from $\mathrm{A}(\widetilde{\mathrm{E}}_6)$ onto $\mathrm{W}(\mathrm{E}_6)$. This is easily checked using the computer algebra program Magma using the \texttt{Homomorphisms} command. Specifically,

\medskip

{\scriptsize
\texttt{G := WeylGroup(RootSystem("E6"));}

\texttt{P := PermutationGroup(FPGroup(G));}

\texttt{A<g1,g2,g3,g4,g5,g6,g7> := Group<g1,g2,g3,g4,g5,g6,g7 |}

\texttt{\quad\quad (g1*g2*g1)=(g2*g1*g2), (g1*g3)=(g3*g1), (g1*g4)=(g4*g1), (g1*g5)=(g5*g1),}

\texttt{\quad\quad (g1*g6)=(g6*g1), (g1*g7)=(g7*g1), (g2*g3*g2)=(g3*g2*g3), (g2*g4)=(g4*g2),} 

\texttt{\quad\quad (g2*g5)=(g5*g2), (g2*g6)=(g6*g2), (g2*g7)=(g7*g2), (g3*g4*g3)=(g4*g3*g4),}

\texttt{\quad\quad (g3*g5)=(g5*g3), (g3*g6*g3)=(g6*g3*g6), (g3*g7)=(g7*g3),}

\texttt{\quad\quad (g4*g5*g4)=(g5*g4*g5), (g4*g6)=(g6*g4), (g4*g7)=(g7*g4), (g5*g6)=(g6*g5),} 

\texttt{\quad\quad (g5*g7)=(g7*g5), (g6*g7*g6)=(g7*g6*g7)>;}

\texttt{B, b := Simplify(A);}

\texttt{\#Homomorphisms(B, P);}
}

\medskip

\noindent quickly reports that there is exactly one homomorphism. More precisely, this means that up to automorphisms of $\mathrm{W}(\mathrm{E}_6)$ (which are all inner), there is a unique surjective homomorphism from $\mathrm{A}(\widetilde{\mathrm{E}}_6)$ onto $\mathrm{W}(\mathrm{E}_6)$, as claimed.

\medskip
\noindent\textbf{Step 2.} $t_\phi \coloneqq \rho(\phi(\tau_r))$ is an elliptic element of $\PGam_4$.

\medskip
\noindent
\textbf{Proof.} It suffices to show that $t_\phi$ is not a loxodromic or parabolic isometry. (It will be convenient for this proof to consider the trivial isometry as elliptic.) Let $\Xi$ be the subset of $\partial \bbB^4$ fixed by $t_\phi$. If $t_\phi$ is loxodromic (resp.\ parabolic) then $\Xi$ is two distinct points $\xi_\pm$ (resp.\ a single point $\xi_\infty$). Since $\PU(4,1)$ has real rank one, commuting loxodromic or parabolic elements necessarily have the same fixed-point set on $\partial \bbB^4$.

However, for each $\al \in \pi_1(\calC)$, the element 
\[
\rho(\phi(\al \tau_r \al^{-1})) = \rho(\phi(\al)) t_\phi \rho(\phi(\al))^{-1}
\]
is an isometry of the same type as $t_\phi$. In particular, if $\bbB^3_s$ is a hyperplane meeting $\bbB^3_r$ for some other $s \in \SR$, then $\tau_r$ commutes with $\tau_s$. Thus $\rho(\phi(\tau_s))$ commutes with $t_\phi$ and hence also has fixed set $\Xi$. \Cref{prop:ConnectedSupport} then implies that $\rho(\phi(\tau_s))$ fixes $\Xi$ for all $s \in \SR$. Since $\pi_1(\calC)$ is moreover normally generated by $\tau_r$ by \Cref{prop:exact_sequence}, it follows that $\rho(\phi(\pi_1(\calC))) = \PGam_4$ fixes $\Xi$, which is absurd.

\medskip
\noindent\textbf{Step 3.} $t_\phi$ has order dividing $6$.

\medskip
\noindent
\textbf{Proof.} Step 1 implies that the reduction of $t_\phi$ modulo $(1 - \om)$ is conjugate in $\mathrm{W}(\mathrm{E}_6)$ to the reduction of $\rho(\tau_r)$. Since the image in $\U(4,1)$ of $\tau_r$ under the monodromy $\wh{\rho}$ is a complex reflection of order six, the reduction modulo $(1 - \om)$ of the finite order element $\wh{\rho}(\phi(\tau_r))$ of $\U(4,1)$ has characteristic polynomial congruent modulo $(1 - \omega)$ to $(x+1)(x - 1)^4$. Since $t_\phi$ is elliptic by Step 2, its characteristic polynomial is a product of cyclotomic polynomials. Considering the cyclotomic polynomials of degree at most five over $\calE$ and their reductions modulo $1 - \om$, the element $t_\phi$ must have order dividing eighteen. However, every element of the kernel of reduction module $1 - \om$ has order at most three by \cite[Lem.\ 7.27]{ACT} and $\rho(\tau_r)$ has order two, implying that $\wh{\rho}(\phi(\tau_r))$ has order divisible by six. This completes Step 3.

\medskip
\noindent
\textbf{Finishing the proof of \Cref{thmKchar}.} First note that Step 3 implies that $\rho(\phi(\tau_r^6))$ is trivial in $\PGam_4$. Thus $\phi(\tau_r^6) \in K$. Since $K$ is normally generated by $\tau_r^6$ by \Cref{prop:exact_sequence}, $\phi(K) \le K$ as desired. This completes the proof of \Cref{thmKchar} \qed

\section{Proof of \Cref{theorem:main1}}\label{sec:MainProof}

Let $f \in \Aut(\calC)$ be given. Then $f$ induces an outer automorphism of $\pi_1(\calC)$. Let $\phi \in \Aut(\pi_1(\calC))$ be a representative of this outer automorphism. Recall that $K$ is the kernel of the projection $\pi_1(\calC)\to\PGam_4$. \Cref{thmKchar} gives that $\phi(K) = K$. It follows that $f$ lifts to a biholomorphic automorphism $\wh{f}$ of the cover ${\bbB^4 \ssm \calH}$ associated with $K$. Note that $\wh{f}$ is a bounded holomorphic map $(\bbB^4 \ssm \calH)\to \bbC^4$ since its image lies in $\bbB^4\subset\bbC^4$. Since $\calH$ is a locally finite union of hyperplanes, it is a nowhere dense analytic subset of $\bbB^4$. Applying the Riemann Extension Theorem \cite[Prop.\ 4.2]{Narasimhan} to each coordinate function on $\bbC^4$ gives that $\wh{f}$ extends to a holomorphic map $\wh{f} : \bbB^4 \to \bbC^4$. Then $\wh{f}(\bbB^4) \subseteq \bbB^4$ by continuity and the Open Mapping Theorem \cite[Prop.\ 1.4]{Narasimhan}.

Since $\wh{f}$ is a homotopy lift and $\PGam_4$ is the orbifold deck group of the covering $(\bbB^4 \ssm \calH)\to\calC$, it follows that for each $\gam \in \PGam_4$ there is a $\beta \in \PGam_4$ so that $\wh{f} \circ \gam = \beta \circ \wh{f}$. The Riemann extensions of $\wh{f} \circ \gam$ and $\beta \circ \wh{f}$ from $\bbB^4 \ssm \calH$ to $\bbB^4$ then agree on a nonempty open subset of $\bbB^4$, so they agree everywhere. In particular, $\wh{f}$ normalizes the extension of $\PGam_4$ from $\bbB^4\ssm\calH$ to $\bbB^4$. 

Applying the same argument to $g = f^{-1}$, lifting to an automorphism $\wh{g}$ of $\bbB^4 \ssm \calH$ with respect to the same base point, the extension of $\wh{g}$ to $\bbB^4$ has the property that $\wh{g} \circ \wh{f}$ is the identity on a nonempty open subset of $\bbB^4$. Thus $\wh{g} \circ \wh{f}$ is the identity everywhere, and so $\wh{f} \in \Aut(\bbB^4) = \PU(4,1)$. In particular, $\wh{f}$ defines an element of the normalizer of $\PGam_4$ in $\PU(4,1)$, and thus it descends to an automorphism of $X$ extending $f$. The theorem then follows from \Cref{cor:ArithNoAuts}. \qed

\begin{rem}\label{rmk1}
We spell out the alternate approach mentioned in \Cref{rmk0}, based on the Eckardt divisor. Allcock--Carlson--Toledo show that a projective biflection of a smooth cubic surface is carried under the period map to a biflection in a long root \cite[Lem.\ 11.4]{ACT}. Conversely, using Segre's theorem, they recall that any smooth cubic surface with a nontrivial projective automorphism admits a biflection \cite[(11.3), (11.12)]{ACT}. Since the monodromy group is transitive on the long roots \cite[Thm.\ 11.13]{ACT}, the long root mirror arrangement maps to a single irreducible divisor, namely the Eckardt divisor.

The generic stabilizer along this divisor is $\mathbb Z/2\mathbb Z$, generated by the Eckardt involution. Since every nontrivial automorphism of a smooth cubic surface gives a biflection, and since the long roots form a single monodromy orbit, the Eckardt divisor is the unique divisor whose generic stabilizer is $\bbZ/2\bbZ$. Any automorphism of the orbifold must therefore preserve this divisor. One can then replace our argument using the short root (i.e., or nodal) divisor by an analogous argument using the long roots (i.e., Eckardt) divisor, obtaining the same conclusion about the automorphism group.

This argument has two drawbacks. First, it does not recover the group-theoretic statement that $K$ is characteristic (\Cref{thmKchar}). Second, it relies on the special role of the Eckardt divisor as a distinguished divisorial component of the stabilizer locus, and is therefore less obviously adaptable to other moduli spaces.
\end{rem}

\bibliography{autc2}{}
\bibliographystyle{abbrv}

\end{document}